\newcommand{\field}[1]{\mathbb{#1}}
\newcommand{\Z}{\field{Z}}
\newcommand{\F}{\field{F}}
\newcommand{\cC}{{\cal C}}
\newcommand{\cF}{{\cal F}}
\newcommand{\cS}{{\cal S}}
\newcommand{\cT}{{\cal T}}
\newcommand{\cR}{{\cal R}}
\newcommand{\cN}{{\cal N}}
\newcommand{\cG}{{\cal G}}
\newcommand{\mmod}{{\textup{mod}}}
\newtheorem{definition}{Definition}
\newtheorem{theorem}{Theorem}
\newtheorem{lemma}{Lemma}
\newtheorem{corollary}{Corollary}
\newtheorem{example}{Example}
\begin{document}

\bibliographystyle{plain}

\title{
\begin{center}
Mixed Steiner Triple Systems \\
with Shortest Length
\end{center}
}
\author{
{\sc Tuvi Etzion}\thanks{Department of Computer Science, Technion,
Haifa 3200003, Israel, e-mail: {\tt etzion@cs.technion.ac.il}.}}

\maketitle

\begin{abstract}
We prove that a 3-GDD of type $1^n k^1 \ell^1$, where $n= k \cdot \ell$, with minimum distance 3 exists
for every $k$ and $\ell$ such that $n = k \cdot \ell$, $k \equiv 1$ or $3~(\mmod ~ 6)$, and
$\ell \equiv 1$ or $3~(\mmod ~ 6)$. These designs are of the shortest possible length (smallest number of elements) for
given $k$ and $\ell$. Other constructions for such triple systems are also presented.
\end{abstract}

\vspace{0.5cm}

\noindent {\bf Keywords:} Group divisible designs, mixed Steiner systems, one-factorizations, triple systems,
pairs-triples design.


\newpage
\section{Introduction}
\label{sec:introduction}

A \emph{Steiner system of order $n$}, S$(t,k,n)$, is a pair $(\cN,B)$, where $\cN$ is an $n$-set
(whose elements are called \emph{points}) and $B$ is a collection
of $k$-subsets (called \emph{blocks}) of~$\cN$, such that each $t$-subset of~$\cN$ is contained
in exactly one block of $B$. A Steiner system can be represented by a binary code~$\cC$ whose codewords have length $n$ and
weight $k$. For each word $x$ of length $n$ and weight $t$, there is exactly one codeword $c \in \cC$ for
which $d(x,c)=k-t$, where $d(y,z)$ is the Hamming distance between the words $y$ and $z$.
As a code an S$(t,k,n)$ has minimum distance $2(k-t)+2$.

There are several generalization of Steiner systems, such as group divisible designs~\cite{Han75}, orthogonal arrays~\cite{HSS99},
generalized Steiner systems~\cite{Etz97}, etc.~. Some of the generalizations considers only the type of blocks in the design and
some also consider the minimum distance. As an example we can consider group divisible design (GDD) with blocks of size 3. This design
does not involve any restriction concerning the minimum distance. Generalized Steiner systems with codewords
(blocks) of size three is the same design, in which minimum distance 3 is required.

In the paper we denote by $\Z_r$ the set of integers $\{ 0,1,\ldots,r-1\}$ with addition modulo $r$.
Let $\Z^*_r$ denote the set of $r-1$ nonzero integers of $\Z_r$, i.e., $\Z^*_r = \Z_r \setminus \{ 0 \}$.

A \emph{mixed Steiner system} MS$(t,k,Q)$, over the mixed alphabet $Q=\Z_{q_1} \times \Z_{q_2} \times \cdots \times \Z_{q_n}$,
is a pair $(Q,\cC)$, where $\cC$ is a set of codewords (blocks) of weight $k$, over $Q$, such that for each word $x$ of weight $t$ over $Q$, there
exists exactly one codeword $c \in \cC$, such that $c$ \emph{covers} $x$, i.e., $d(x,c)=k-t$, and
the minimum Hamming distance of the code $\cC$ is $2(k-t)+1$. When all the $q_i$'s are equal the mixed Steiner system
is a generalized Steiner system and if the minimum distance is not considered the generalized Steiner system is a H-design
or a $(t,k)$-GDD. If not all the $q_i$'s are equal and the minimum distance is not considered, then the mixed Steiner system
is just a nonuniform $(t,k)$-GDD. A \emph{nonuniform $(t,k)$-GDD} of type $g_1^{r_1} g_2^{r_2} \cdots g_s^{r_s}$ consists of a collection of
$\sum_{i=0}^s r_i$ disjoint groups of $s$ sizes, $g_i$, $1 \leq i \leq s$, where the number of groups of size $g_i$ is $r_i$.
A $(t,k)$-GDD contains blocks with $k$ element, each one from a different group, such that each subset of $t$ elements
from $t$ different groups is contained in exactly one block. Mixed Steiner systems were defined in~\cite[Chapter 7]{Etz22}
and further discussed in~\cite{Etz25}.

Our goal in this paper is to consider \emph{mixed Steiner triple systems (MSTSs)} MS$(2,3,Q)$. It is not difficult to observe
that if $Q= \Z_2^n \times \Z_{k+1}$, then there is no difference between the mixed Steiner systems and the associated $(2,3)$-GDDs
(since only these$(2,3)$-GDDs will be discuss we will omit the parameters $(2,3)$) of type $1^n k^1$.
This case of GDDs was completely solved in~\cite{CHR92}. Moreover, the existence of GDDs of type $1^n k^r$, where $r > 1$ was
completely settled in~\cite{CCK95}. However, when $r >1$, the GDD is not necessarily a MSTS and there are parameters
where the GDD exists while an MSTS with the same parameters does not exists. In this paper we consider MSTS MS$(2,3,Q)$
for $Q= \Z_2^n \times \Z_{k+1} \times \Z_{\ell+1}$, where $k \geq \ell$.

The rest of the paper is organized as follows.
Section~\ref{sec:Preli} discusses some of the structures required for our constructions,
introduces some known results and especially those results which yield
MSTS MS$(2,3,Q)$ for $Q= \Z_2^n \times \Z_{k+1} \times \Z_{\ell+1}$.
Section~\ref{sec:conditions} examines some of the necessary conditions
for the existence of MSTSs and discusses some of the difference between them and the associated GDDs.
Section~\ref{sec:const_MSTS} presents a construction for MSTSs of the shortest possible
length (smallest $n$ given $k$ and $\ell$) as required by the necessary conditions.
A recursive construction of other MSTSs are presented in Section~\ref{sec:more}.
Conclusion and future research are discussed in Section~\ref{sec:conclude}

\section{Preliminaries}
\label{sec:Preli}

We start by a representation of a codeword in an MSTS MS$(2,3,\Z_2^n \times \Z_{k+1} \times \Z_{\ell+1})$. We will use two representations
The first $n$ coordinates are used for $\Z_2^n$, the $n$-th coordinate for $\Z_{k+1}$ and
the $(n+1)$-th coordinate for $\Z_{\ell+1}$.
One representation is to represent a triple
as $\{(i_1,\alpha_1),(i_2,\alpha_2),(i_3,\alpha_3)\}$ which means that the codeword has three nonzero entries at
positions $i_1$, $i_2$, and $i_3$. The nonzero value at $i_1$ is $\alpha_1$, the nonzero value at $i_2$ is $\alpha_2$, and
the nonzero at $i_3$ is $\alpha_3$. In the second representation for codewords of an MS$(2,3,\Z_2^n \times \Z_{k+1} \times \Z_{\ell+1})$, where
$n=k \cdot \ell$, the first $n$ coordinates are considered over $\Z_k \times \Z_\ell$ and a triple is represented
as $\{ [i_1,j_1],[i_2,j_2],[i_3,j_3]\}$ where a pair such as $[i_1,j_1]$ means that there is an \emph{one} in coordinate $[i_1,j_1]$.
A triple such as $\{ [i,j],(n,\alpha),(n+1,\beta)\}$ means that there is an \emph{one} in coordinate $[i,j]$, $\alpha \in \Z^*_{k+1}$
in the $n$-th coordinate and $\beta \in \Z^*_{\ell+1}$ in the $(n+1)$-th coordinate.

\begin{definition}
A \emph{Steiner triple system of order $v$}, STS$(v)$, is an S$(2,3,v)$. Such a system has $\frac{v(v-1)}{2}$ blocks.
\end{definition}

Mixed Steiner systems can be obtained from perfect mixed codes defined next.

\begin{definition}
\label{def:mixedPerfect}
An \emph{$e$-perfect mixed code} $\cC$ is a code over a mixed alphabet $Q$, where
$Q= \Z_{q_1} \times \Z_{q_2} \times \cdots \times \Z_{q_{n-1}} \times \Z_{q_n}$,
and covering radius $e$ (minimum distance $2e+1$) is a code whose codewords have length $n$ over the alphabet $Q$. For each word $x \in Q$ there exists
a unique codeword $c \in \cC$ such that $d(x,c) \leq e$.
\end{definition}

It is well-known that over an alphabet $\Z_2^n$ in such a perfect code, that
contains the all-zero codeword, the codewords
of weight $2e+1$ form a Steiner system S$(e,2e+1,n)$.  A straightforward generalization yields
the following lemma.

\begin{lemma}
\label{lem:fromMperfect}
If an $e$-perfect mixed code over $Q$ contains the all-zero codeword, then the codewords
of weight $2e+1$ form a mixed Steiner system MS$(e,2e+1,Q)$.
\end{lemma}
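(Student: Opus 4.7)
The plan is to verify, for the subset $\cC' \subseteq \cC$ of codewords of weight $2e+1$, the two defining conditions of a mixed Steiner system. I will read the parameters as $t = e+1$ and $k = 2e+1$, so that $k - t = e$; this is the parameter choice under which the statement specializes, in the binary case, to the classical fact that the minimum-weight codewords of an $e$-perfect binary code form an S$(e+1, 2e+1, n)$ (for $e=1$ this is STS$(n) = $ S$(2,3,n)$, matching the paper's focus on MSTSs arising from perfect $1$-codes). What has to be checked is then (a) $d_{\min}(\cC') \geq 2(k-t)+1 = 2e+1$, and (b) each word $x$ of weight $e+1$ is covered by a unique $c \in \cC'$.

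Condition (a) is immediate: $\cC' \subseteq \cC$ inherits $d_{\min} \geq 2e+1$ directly from the perfect-code hypothesis. For condition (b), I would fix $x$ of weight $e+1$ and invoke perfectness to extract the unique $c \in \cC$ with $d(x,c) \leq e$. Since $d(x,0) = e+1 > e$, this $c$ is not the all-zero codeword, so $\mathrm{wt}(c) \geq 2e+1$ by the minimum distance and the hypothesis that $0 \in \cC$. The opposite triangle inequality
\[
\mathrm{wt}(c) = d(c,0) \leq d(c,x) + d(x,0) \leq e + (e+1) = 2e+1
\]
pins $\mathrm{wt}(c) = 2e+1$ (so $c \in \cC'$) and simultaneously forces $d(x,c) = e = k-t$. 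A coordinate bookkeeping step---classifying each of the $n$ positions by whether it lies in the support of $x$, the support of $c$, or both, and within the intersection by whether the nonzero entries of $x$ and $c$ agree---then turns $d(x,c) = k-t$, together with $\mathrm{wt}(x) = t$ and $\mathrm{wt}(c) = k$, into the assertion that the support of $x$ is contained in that of $c$ with pointwise equal nonzero values on the overlap. This is exactly the mixed-alphabet meaning of ``$c$ covers $x$''. Uniqueness is automatic: any second $c' \in \cC'$ covering $x$ satisfies $d(x,c') = k-t = e$, lies in the radius-$e$ sphere around $x$, and therefore coincides with $c$ by perfectness.

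The only step that requires a whisker more care than the binary prototype is the support bookkeeping, where one has to rule out the possibility that $x$ and $c$ share a coordinate on which both are nonzero but with different values. A short linear count over the four coordinate classes (common support with agreeing values, common support with differing values, support of $x$ only, support of $c$ only) uses $d(x,c) = k-t$ together with $\mathrm{wt}(x) = t$ and $\mathrm{wt}(c) = k$ to force the ``differing'' class to be empty. Beyond that, the argument is a clean three-step application of perfectness, the minimum-distance bound, and the triangle inequality; I do not see a genuine obstacle.
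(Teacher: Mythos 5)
Your proof is correct, and it is precisely the ``straightforward generalization'' of the binary case that the paper invokes without writing out any proof: perfectness gives a unique $c$ within distance $e$ of a weight-$(e+1)$ word $x$, the triangle inequality against the all-zero codeword pins $\mathrm{wt}(c)=2e+1$ and $d(x,c)=e$, the coordinate count forces the supports to nest with agreeing values, and uniqueness follows from the disjointness of the radius-$e$ balls. Your reading of the parameters as $t=e+1$, $k=2e+1$ is also the right one: the lemma as printed (MS$(e,2e+1,Q)$) is inconsistent with its own application, since the paper uses $1$-perfect mixed codes to produce MS$(2,3,Q)$, which requires $t=e+1$, so you have correctly identified and repaired a typo rather than altered the intended statement.
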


We continue with 1-perfect mixed  codes based on partitions of the nonzero element of $\F_{q^m}$ into subsets, each one a subspace
if the zero element is added. The following theorem was proved for example in~\cite{Zar51,Zar52} and also in~\cite[p. 180, Theorem 7.1]{Etz25}.

\begin{theorem}
\label{thm:mixed_partitions}
Let $\{\cS_1, \cS_2,\ldots,\cS_n \}$ be a partition of $\F^{q^m} \setminus \{ \bf0 \}$ into $n$ subsets such that
$\cT_i \triangleq \cS_i \cup \{ \bf0 \}$ is a subspace of dimension $k_i$ over $\F_q$. The code defined by
$$
\cC \triangleq \left\{ (c_1,c_2,\ldots,c_n) ~:~ c_i \in \cS_i \cup \{ 0 \},~ \sum_{i=1}^n c_i =0  \right\},
$$
where the sum is performed in $\F_{q^m}$,
is a 1-perfect mixed code over ${\cT_1 \times \cT_2  \times \cdots \times \cT_n}$ which is
isomorphic to $\F_{q^{k_1}} \times \F_{q^{k_2}} \times \dots \times \F_{q^{k_n}}$.
\end{theorem}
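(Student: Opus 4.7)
The plan is to verify the $1$-perfect property directly, by showing that Hamming balls of radius $1$ centered at the codewords of $\cC$ partition the ambient space $\cT_1 \times \cT_2 \times \cdots \times \cT_n$. First I would count the size of such a ball. Altering coordinate $i$ of a fixed word can produce any of $q^{k_i}-1$ distinct new words (the other elements of $\cT_i$), and different coordinates yield disjoint alterations, so the ball size is $1 + \sum_{i=1}^n (q^{k_i}-1) = 1 + (q^m-1) = q^m$, using that the $\cS_i$ partition $\F_{q^m} \setminus \{\bf0\}$.

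Next I would count $|\cC|$. The map $\sigma \colon \cT_1 \times \cdots \times \cT_n \to \F_{q^m}$, $(c_1,\ldots,c_n) \mapsto \sum_i c_i$, is $\F_q$-linear because each $\cT_i$ is an $\F_q$-subspace of $\F_{q^m}$. It is surjective: the zero has the all-zero preimage, and any nonzero $y$ lies in a unique $\cS_j$, so placing $y$ in coordinate $j$ and $\bf0$ elsewhere maps to $y$. Thus $|\cC| = |\ker \sigma| = q^{\sum_i k_i}/q^m$, and multiplying by the ball size $q^m$ recovers $q^{\sum_i k_i} = |\cT_1 \times \cdots \times \cT_n|$. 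So the sphere-packing bound is met with equality, and it suffices to prove disjointness of the balls.

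For disjointness, suppose two codewords $c,c' \in \cC$ are both within distance $1$ of some word $x$. If both equal $x$ they coincide. If $c=x$ while $c'$ differs from $x$ in a single coordinate $i$, then $\sigma(c)-\sigma(c') = c_i - c_i' = 0$ forces $c_i = c_i'$, a contradiction. The interesting case is when $c$ differs from $x$ only at position $i$ and $c'$ only at position $j$ with $i \neq j$; then $c - c'$ is supported on $\{i,j\}$ with $c_i - c_i' \in \cT_i \setminus \{\bf0\} = \cS_i$ and $c_j' - c_j \in \cS_j$, and the condition $\sigma(c-c')=0$ forces $c_i - c_i' = c_j' - c_j$, contradicting $\cS_i \cap \cS_j = \emptyset$. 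I expect this final disjointness step to be the only non-routine part, and it relies squarely on the partition hypothesis; once that is in hand, the sphere-packing count does the rest. The isomorphism statement is then immediate from the fact that a $k_i$-dimensional $\F_q$-subspace of $\F_{q^m}$ is $\F_q$-isomorphic to $\F_{q^{k_i}}$.
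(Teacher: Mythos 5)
Your argument is correct, but note that the paper itself gives no proof of this theorem: it is stated with citations to Zaremba and to \cite[Theorem 7.1]{Etz25}, so there is no internal proof to compare against. Your sphere-packing route is essentially the classical argument from those sources: the ball size $1+\sum_i(q^{k_i}-1)=q^m$ uses the partition hypothesis, the codeword count comes from $\cC=\ker\sigma$ with $\sigma$ surjective, and disjointness of the balls is exactly the statement that $\cC$ has minimum distance at least $3$, which your support analysis on $c-c'$ establishes. One small omission: in the disjointness case analysis you treat ``$c=x$, $c'$ differs at one coordinate'' and ``$c$ differs at $i$, $c'$ differs at $j\neq i$,'' but not the case where $c$ and $c'$ both differ from $x$ at the \emph{same} coordinate $i$; that case is disposed of identically ($c-c'$ is supported on $\{i\}$ alone, so $\sigma(c-c')=c_i-c_i'=0$ forces $c=c'$), so the gap is cosmetic. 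An alternative that avoids counting entirely is syndrome decoding: given any word $x$, set $s=\sigma(x)$; if $s=0$ then $x\in\cC$, and otherwise $s$ lies in a unique $\cS_j$, so replacing $x_j$ by $x_j-s\in\cT_j$ produces a codeword at distance $1$, with uniqueness again following from minimum distance $3$. This gives existence and uniqueness of the covering codeword in one stroke, but your version is equally valid.
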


Perfect mixed codes were considered first by Sch\"{o}nheim~\cite{Sch70} and later by~\cite{HeSc71,HeSc72}
(see~\cite{Etz22} for more description, results, and references).
All the codes considered in these papers have covering radius 1.
The definition of perfect mixed codes, Lemma~\ref{lem:fromMperfect}, and their construction
in Theorem~\ref{thm:mixed_partitions} implies a very simple way to
construct a MSTS MS$(2,3,Q)$ by partitioning the space into disjoint subspaces (omitting the all-zero vector). Such partitions
were extensively studied in the literature, e.g.,~\cite{ESSSV07} and they are associated with byte-correcting codes~\cite{HoPa72}
as explained in~\cite{Etz98}.

The last definition is for one-factorization and near-one factorization of the complete graph $K_n$ on the set of points $\Z_n$.
These concepts are extensively studied in~\cite{Wal97}.

\begin{definition}
A \emph{one-factor} of $K_n$ on the point set $\Z_n$, where $n$ is even, is a partition of $\Z_n$ into $\frac{n}{2}$ disjoint pairs.
A \emph{one-factorization} of $K_n$ on the point set $\Z_n$, where $n$ is even, is a partition of all the $\binom{n}{2}$ pairs
of $\Z_n$ into $n-1$ disjoint one-factors.
\end{definition}

\begin{definition}
A \emph{near-one-factor} of $K_n$ on the point set $\Z_n$, where $n$ is odd, is a partition of $\Z_n$ into $\frac{n-1}{2}$ disjoint pairs
and one isolated point.
A \emph{near-one-factorization} of $K_n$ on the point set $\Z_n$, where $n$ is odd, is a partition of all the $\binom{n}{2}$ pairs
of $\Z_n$ into $n$ disjoint near-one-factors. If the near-one-factorization is $\{ F_0,F_1,\cdots,F_{n-1} \}$ we assume that the isolated
point in $F_i$, $0 \leq i \leq n-1$, is $i \in \Z_n$.
\end{definition}

\section{Necessary Conditions}
\label{sec:conditions}

Necessary and sufficient conditions for the existence of a GDD of type $1^n k^2$ were
presented in~\cite{CCK95}. These conditions are also necessary conditions for the existence of an
MS$(2,3, \Z_2^n \times \Z^2_{k+1})$, but these conditions are not enough due to minimum distance
required for the mixed Steiner system. Moreover, these conditions are not enough for a GDD of type $1^n k^1 \ell^1$ when $k \neq \ell$,

The first lemma presents five necessary conditions for the existence of an MS$(2,3, \Z_2^n \times \Z_{k+1} \times \Z_{\ell+1})$.

\begin{lemma}
\label{lem:necessaryMSTS}
If there exists an \textup{MS}$(2,3, \Z_2^n \times \Z_{k+1} \times \Z_{\ell+1})$ then
\begin{enumerate}
\item[\textup{(1)}] $n - k \equiv 0~(\mmod~2)$.

\item[\textup{(2)}] $n - \ell \equiv 0~(\mmod~2)$.

\item[\textup{(3)}] $k \equiv \ell \equiv 1~(\mmod ~2)$.

\item[\textup{(4)}] $n \geq k \cdot \ell$.

\item[\textup{(5)}] $k \cdot \ell + (k + \ell)n + \binom{n}{2} \equiv 0~(\mmod~3)$.
\end{enumerate}
\end{lemma}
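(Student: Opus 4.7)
Partition the ``points'' (position--value pairs with nonzero value) of $Q$ into three classes: $A$, consisting of one point per binary coordinate ($|A|=n$); $B$, the $k$ points on the $\Z_{k+1}$-coordinate; and $C$, the $\ell$ points on the $\Z_{\ell+1}$-coordinate. Each of the five conditions will follow from a counting argument; only condition (4) will invoke the minimum distance in an essential way.

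For conditions (1) and (2), fix a point $P\in B$ and classify the blocks through $P$. Since no two points of a block can share a coordinate, every block through $P$ is either of type $AA$ (contributing two pairs from $P$ to $A$) or of type $AC$ (one pair to $A$ and one to $C$). If $x,y$ are the respective counts, then $2x+y=n$ and $y=\ell$, so $n-\ell=2x$ is even; a symmetric count at $P\in C$ gives $n-k$ even. For condition (3), the same exercise at a point $P\in A$ produces four block types ($AA$, $AB$, $AC$, $BC$) and three replication equations (for pairs from $P$ to $A\setminus\{P\}$, $B$, and $C$), which combine modulo $2$ into $n-1\equiv k+\ell \pmod 2$. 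Together with (1) and (2) this forces $n$ --- and hence $k$ and $\ell$ --- to be odd.

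For condition (4), consider the $k\ell$ pairs $\{P_B,P_C\}$ with $P_B\in B$ and $P_C\in C$; each is covered by a unique block whose third point must lie in $A$. I claim no binary coordinate $i$ can appear in two such blocks: if two blocks $\{(i,1),P_B^{(j)},P_C^{(j)}\}$ for $j=1,2$ existed, they would either share a second point (contradicting unique coverage of the resulting pair) or agree at coordinate $i$ and differ only in the two special coordinates, yielding Hamming distance at most $2$ and violating the minimum-distance-$3$ hypothesis. Hence these $k\ell$ blocks consume $k\ell$ distinct binary coordinates, giving $n\ge k\ell$.

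Finally, condition (5) is a global count: a block of weight $3$ covers exactly $\binom{3}{2}=3$ weight-$2$ words of $Q$, and the total number of weight-$2$ words over $Q$ is $\binom{n}{2}+n(k+\ell)+k\ell$, which therefore must be divisible by $3$. The only step requiring any genuine care is the minimum-distance argument in (4); conditions (1)--(3) and (5) reduce to parity and divisibility bookkeeping on pair-counts.
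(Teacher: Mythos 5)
Your proof is correct and follows essentially the same route as the paper's: fixing a point on a non-binary coordinate and pairing off the remaining points for (1)--(3), using the minimum distance to force distinct binary coordinates for the $k\ell$ blocks covering the $B$--$C$ pairs for (4), and the global pair/block count for (5). The only differences are cosmetic (you organize the replication counts by block type, and you spell out the two subcases in the distance argument for (4) that the paper compresses into ``$d(X,Y)\geq 3$ hence $m \neq p$'').
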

\begin{proof}
$~$
\begin{enumerate}
\item[(1)] There are $k+n$ nonzero points in $\Z_2^n \times \Z_{k+1}$.
Each point of $\Z^*_{\ell+1}$ is paired with each nonzero element of $\Z_2^n \times \Z_{k+1}$ in a codeword.
Each such point of $\Z^*_{\ell+1}$ is contained in codewords with disjoint nonzero pairs of $\Z_2^n \times \Z_{k+1}$. Hence,
$k+n$ is even, i.e., $n - k \equiv 0~(\mmod~2)$.

\item[(2)] The proof is identical to the one of (1).

\item[\textup{(3)}] Similarly, each word of weight one of $\Z_2^n$
must appear with each other point of $\Z_2^n \times \Z_{k+1} \times \Z_{\ell+1}$ in codewords of weight three
of $\Z_2^n \times \Z_{k+1} \times \Z_{\ell+1}$ and hence $k+\ell+n-1$ must be even. (1) and (2) imply that
$k+\ell$ is even and hence $n$ is odd. Therefore, by (1) and (2) also $k$ and $\ell$ are odds.

\item[(4)] There are $k \cdot \ell$ pairs of the form $\{(n,i),(n+1,j)\}$, where $i \in \Z^*_{k+1}$ and $j \in \Z^*_{\ell+1}$.
Each such pair is covered by a unique codeword of the form $\{(m,1),(n,i),(n+1,j)\}$, where $m \in \Z_n$. Hence, there are $k \cdot \ell$ codewords
of this form.
For each two distinct codewords $X=\{(m,1),(n,i_1),(n+1,j_1)\}$ and $Y=\{(p,1),(n,i_2),(n+1,j_2)\}$ we have that
$d(X,Y) \geq 3$ and hence $m \neq p$ and since there are $k \cdot \ell$ codewords of this form, it follows that $n \geq k \cdot \ell$.

\item[(5)] There are $k \cdot \ell$ pairs of the form $\{(n,i),(n+1,j)\}$, where $i \in \Z^*_{k+1}$ and $j \in \Z^*_{\ell+1}$.
There are $k \cdot n$ pairs of the form $\{ (m,1),(n,i) \}$, where $m \in \Z_n$ and $i \in \Z^*_{k+1}$ and similarly
there are $\ell \cdot n$ pairs of the form $\{ (m,1),(n+1,j) \}$, where $m \in \Z_n$ and $j \in \Z^*_{\ell+1}$.
Finally, there are $\binom{n}{2}$ pairs of the form $\{ (m,1),(p,1) \}$, where $m,p \in \Z_n$.
Each such pair is covered by exactly one codeword and each codeword contains three distinct pairs
and therefore $k \cdot \ell + (k + \ell)n + \binom{n}{2} \equiv 0~(\mmod~3)$.
\end{enumerate}
\end{proof}

As a consequence from the proof of Lemma~\ref{lem:necessaryMSTS}(5) we have an enumeration on the number
of codewords in an MS$(2,3,\Z_2^n \times \Z_{k+1} \times \Z_{\ell+1})$.
\begin{corollary}
\label{cor:numCodeW}
The number of codewords in an \textup{MS}$(2,3,\Z_2^n \times \Z_{k+1} \times \Z_{\ell+1})$ is $(k \cdot \ell + (k + \ell)n + \binom{n}{2})/3$.
\end{corollary}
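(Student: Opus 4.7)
The plan is to repeat the double-counting argument that appears in the proof of Lemma~\ref{lem:necessaryMSTS}(5), but to retain the actual equality rather than only its residue modulo~3. Let $N$ denote the number of codewords in the MS$(2,3,\Z_2^n \times \Z_{k+1} \times \Z_{\ell+1})$. I would count in two ways the ordered-by-position pairs $\{(i_1,\alpha_1),(i_2,\alpha_2)\}$ that lie inside a single codeword. On one hand, every codeword has weight exactly $3$, so it contributes $\binom{3}{2}=3$ such pairs; the total is therefore $3N$. On the other hand, by the defining property of a mixed Steiner system each weight-$2$ word over $Q$ is covered by a unique codeword, and each such word corresponds to exactly one of these pairs. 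Hence $3N$ equals the total number of weight-$2$ words over $Q$.

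To finish, I would enumerate the weight-$2$ words according to which two coordinates carry the nonzero entries, exactly as in the proof of Lemma~\ref{lem:necessaryMSTS}(5): pairs of type $\{(n,i),(n+1,j)\}$ contribute $k\cdot\ell$; pairs of type $\{(m,1),(n,i)\}$ with $m\in\Z_n$ contribute $k\cdot n$; pairs of type $\{(m,1),(n+1,j)\}$ with $m\in\Z_n$ contribute $\ell\cdot n$; and pairs of type $\{(m,1),(p,1)\}$ with $m,p\in\Z_n$ contribute $\binom{n}{2}$. Summing yields $3N = k\cdot\ell + (k+\ell)n + \binom{n}{2}$, and dividing by~$3$ gives the claimed formula. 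There is essentially no obstacle here: the result is a direct bookkeeping corollary of the argument already carried out for part (5) of the lemma, and the only thing to check is that the four families above exhaust all weight-$2$ words over $Q$, which is immediate from the product structure of $Q$.
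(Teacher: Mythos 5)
Your proof is correct and matches the paper's reasoning: the corollary is stated there as a direct consequence of the double-counting in the proof of Lemma~\ref{lem:necessaryMSTS}(5), which is precisely the pair-count argument you carry out (each codeword covers three weight-2 words, each weight-2 word is covered exactly once, and the four families of pairs exhaust all weight-2 words). No issues.
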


The necessary conditions of Lemma~\ref{lem:necessaryMSTS} (1), (2), (3), and (5), are also necessary conditions for the associated GDD.
The only condition which takes the minimum distance into account is (4). We are first interested which values of $k$ and $\ell$ can yield
a code which meets this condition with equality, i.e., $n=k \cdot \ell$.
By Lemma~\ref{lem:necessaryMSTS} we have that $k$, $\ell$, and $n$ must be odd integers.
It is not difficult to verify
that all odd values of $k$ and $\ell$ are possible except for $k \equiv \ell \equiv 5 ~ (\mmod ~ 6)$.
Next, we look on the possible values of $k$, $\ell$, and $n$ modulo 6 for which we can have an
MS$(2,3, \Z_2^n \times \Z_{k+1} \times \Z_{\ell+1})$. Also in this case
$k$, $\ell$, and $n$ must be odd and when $k \equiv \ell \equiv 5 ~ (\mmod ~ 6)$, there is no $n$ for which such an MSTS exists.
If $k \equiv \ell \equiv 1 ~ (\mmod ~ 6)$, then $n \equiv 1$ or $5~(\mmod~6)$.
If $k \equiv \ell \equiv 3 ~ (\mmod ~ 6)$, then $n \equiv 1$ or $3~(\mmod~6)$.
If $k \equiv 1 ~ (\mmod ~ 6)$ and $\ell \equiv 3 ~ (\mmod ~ 6)$, then $n \equiv 3$ or $5~(\mmod~6)$.
If $k \equiv 1 ~ (\mmod ~ 6)$ and $\ell \equiv 5 ~ (\mmod ~ 6)$, then $n \equiv 5~(\mmod~6)$.
If $k \equiv 3 ~ (\mmod ~ 6)$ and $\ell \equiv 5 ~ (\mmod ~ 6)$, then $n \equiv 3~(\mmod~6)$.
The same is true when $k$ and $\ell$ switch values modulo 6.

Using Theorem~\ref{thm:mixed_partitions} we can construct MSTSs which attain the condition (4) of Lemma~\ref{lem:necessaryMSTS}.
For a given $n$ we take two integers $k$ and $\ell$ such that $k+\ell=n$. There exist two disjoint subspaces of~$\F_{2^n}$ with dimension
$k$ and $\ell$. Therefore, by Theorem~\ref{thm:mixed_partitions} and Lemma~\ref{lem:fromMperfect} we have the following simple consequence.

\begin{lemma}
If $n$, $k$, and $\ell$ are positive integers such that $n=k+\ell$, then there exist an \textup{MS}$(2,3,\Z_2^m \times \Z_{2^k} \times \Z_{2^\ell})$,
where $m=2^n - 2^k - 2^\ell +1 =(2^k-1)(2^\ell-1)$, which attains Lemma~\textup{\ref{lem:necessaryMSTS}(4)}.
\end{lemma}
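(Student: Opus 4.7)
The plan is to apply Theorem~\ref{thm:mixed_partitions} directly, with a partition of $\F_{2^n} \setminus \{\mathbf{0}\}$ tailored to produce exactly the component sizes named in the lemma, and then read off the MSTS via Lemma~\ref{lem:fromMperfect}.

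First I would pick two complementary $\F_2$-subspaces $\cT_1, \cT_2 \subseteq \F_{2^n}$ of dimensions $k$ and $\ell$ respectively. This is possible because $\dim \cT_1 + \dim \cT_2 = k + \ell = n$, so I can, for instance, fix an $\F_2$-basis of $\F_{2^n}$, let $\cT_1$ be the span of the first $k$ vectors and $\cT_2$ the span of the remaining $\ell$ vectors; then $\cT_1 \cap \cT_2 = \{\mathbf{0}\}$ and $|\cT_1| + |\cT_2| - 1 = 2^k + 2^\ell - 1$. Set $\cS_1 \triangleq \cT_1 \setminus \{\mathbf{0}\}$ and $\cS_2 \triangleq \cT_2 \setminus \{\mathbf{0}\}$.

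Next I would enumerate the remaining nonzero elements $\F_{2^n} \setminus (\cT_1 \cup \cT_2)$, of which there are
\[
(2^n - 1) - (2^k - 1) - (2^\ell - 1) = 2^n - 2^k - 2^\ell + 1 = (2^k-1)(2^\ell-1) = m,
\]
and declare each one to be its own singleton $\cS_{i+2}$ for $i = 1,\dots,m$; then $\cT_{i+2} \triangleq \cS_{i+2} \cup \{\mathbf{0}\}$ is an $\F_2$-subspace of dimension $1$. Together with $\cS_1, \cS_2$ this gives a partition of $\F_{2^n} \setminus \{\mathbf{0}\}$ meeting the hypotheses of Theorem~\ref{thm:mixed_partitions}, with $k_1 = k$, $k_2 = \ell$, and $k_3 = \dots = k_{m+2} = 1$. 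Theorem~\ref{thm:mixed_partitions} then produces a $1$-perfect mixed code $\cC$ over $\cT_1 \times \cT_2 \times \cT_3 \times \cdots \times \cT_{m+2}$, which is isomorphic (as a set with an abelian-group structure on each coordinate) to $\Z_{2^k} \times \Z_{2^\ell} \times \Z_2^m$. By construction $\cC$ contains the all-zero vector, so Lemma~\ref{lem:fromMperfect} applied with $e = 1$ says that the weight-$3$ codewords of $\cC$ form an MS$(2,3,\Z_2^m \times \Z_{2^k} \times \Z_{2^\ell})$.

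Finally I would verify the bound. In the notation of Lemma~\ref{lem:necessaryMSTS}, the alphabet $\Z_2^m \times \Z_{(2^k-1)+1} \times \Z_{(2^\ell-1)+1}$ corresponds to $n \leftarrow m$, $k \leftarrow 2^k - 1$, $\ell \leftarrow 2^\ell - 1$, so condition~(4) reads $m \geq (2^k-1)(2^\ell-1)$, which holds with equality by the computation of $m$ above. The only step that requires any care is making sure the application of Theorem~\ref{thm:mixed_partitions} really yields the claimed alphabet; since the two ``large'' components are genuine subfields/subspaces of $\F_{2^n}$, the isomorphism with $\Z_{2^k} \times \Z_{2^\ell} \times \Z_2^m$ is immediate, and nothing else in the argument is more than bookkeeping.
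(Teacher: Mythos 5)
Your proposal is correct and follows essentially the same route as the paper: the paper likewise takes two disjoint subspaces of $\F_{2^n}$ of dimensions $k$ and $\ell$, leaves the remaining $(2^k-1)(2^\ell-1)$ nonzero elements as one-dimensional pieces, and invokes Theorem~\ref{thm:mixed_partitions} together with Lemma~\ref{lem:fromMperfect}. Your write-up merely makes explicit the counting and the verification of condition~(4), which the paper treats as immediate.
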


If $n >k+\ell$ then using Theorem~\ref{thm:mixed_partitions} we obtain an \textup{MS}$(2,3,\Z_2^m \times \Z_{2^k} \times \Z_{2^\ell})$
which does not attain the bound of Lemma~\ref{lem:necessaryMSTS}(4) with equality, i.e., $m > (2^k-1)(2^\ell-1)$.

\section{Construction of Short Length MSTSs}
\label{sec:const_MSTS}

This section is devoted to the main result of this paper, a construction for MSTSs of the form
MS$(2,3,\Z_2^n \times \Z_{k+1} \times \Z_{\ell+1})$ with the shortest length, i.e.,
MSTSs which attain the necessary condition of Lemma~\ref{lem:necessaryMSTS}(4) with equality.
In other words, we define a construction for MS$(2,3,\Z_2^n \times \Z_{k+1} \times \Z_{\ell+1})$
for which $n= k \cdot \ell$. The construction will be combined of three
parts. In the first part we construct the codewords which cover all the pairs of the form $\{ (n,i),(n+1,j)\}$, where $i \in \Z^*_{k+1}$
and $j \in \Z^*_{\ell+1}$. In this part also some pairs of the form $\{[x,y],(n,i)\}$, where $x \in \Z_k$, $y \in \Z_\ell$, and
$i \in \Z^*_{k+1}$ and of the form $\{[x,y],(n+1,j)\}$, where $x \in \Z_k$, $y \in \Z_\ell$, and $j \in \Z^*_{\ell+1}$ will be covered.
In the second part we construct codewords which cover the other pairs of this form which are not covered in the first part.
There are pairs of the form $\{[x_1,y_1],[x_2,y_2]\}$, where $x_1,x_2 \in \Z_k$ and $y_1,y_2 \in \Z_\ell$ which will be covered
in this part. The pairs of this form which are not covered in the second part will be covered by codewords
which are constructed in the third part.

\noindent
{\bf Part I:}

It is trivial to cover the pairs of the form $\{ (n,i),(n+1,j)\}$. Moreover, there is a unique way to define the codewords which cover these triples
avoiding minimum distance less than 3.
The only task is to define these codewords in a way that it will be simple to define the rest of the codeword in Part II and Part III.
Hence, we define the first set of codewords with triples as follows:
$$
C_1 \triangleq \{  [i-1,j-1],(n,i),(n+1,j) ~:~ i \in \Z^*_{k+1},~ j \in \Z^*_{\ell+1} \}~.
$$

\noindent
{\bf Part II:}

The pairs of the form $\{[x,y],(n,i)\}$, where $x \in \Z_k$, $y \in \Z_\ell$, and $i \in \Z^*_{k+1}$ which are covered by the codewords
of $C_1$ are all those for which $x=i-1$. Similarly,
the pairs of the form $\{[x,y],(n+1,j)\}$, where $x \in \Z_k$, $y \in \Z_\ell$, $j \in \Z^*_{\ell+1}$ which are covered by the codewords
of $C_1$ are all those for which $y=j-1$. The rest of the pairs of these two forms will be covered by using two near-one-factorizations.

The first one on $\Z_k$,
$$
\cF \triangleq \{ F_0,F_1,\ldots,F_{k-1}  \},
$$
where in $F_i$, $0 \leq i \leq k-1$, the point $i \in \Z_k$ is isolated.

The second one on $\Z_\ell$,
$$
\cG \triangleq \{ G_0,G_1,\ldots,G_{\ell-1}  \},
$$
where in $G_i$, $0 \leq i \leq \ell -1$, the point $i \in \Z_\ell$ is isolated.

Now, we define two set of codewords in this part.
$$
C_2 \triangleq \{ \{[x,j],[y,j],(n,i)\} ~: i \in \Z^*_{k+1}, ~ j \in \Z_\ell, ~ \{x,y\} \in F_{i-1} \}
$$
$$
C_3 \triangleq \{ \{[i,x],[i,y],(n+1,j)\} ~: j \in \Z^*_{\ell+1}, ~ i \in \Z_k, ~ \{ x,y \} \in G_{j-1} \}
$$

A pair of the form $\{ [x,y],(n,i)\}$ where $x \in \Z_k$, $y \in \Z_\ell$, and $i \in \Z^*_{k+1}$ is covered as follows:
\begin{enumerate}
\item If $x=i-1$ then this pair is covered by the triple $\{[x,y],(n,i),(n+1,y+1)]\}$ defined in $C_1$.

\item If $x \neq i-1$, then there is a unique $z \in \Z_{k +1}$ such that $\{ x,z \} \in F_{i-1}$ and
this pair is covered by the triple $\{[x,y],[z,y],(n,i)]\}$ defined in $C_2$.
\end{enumerate}

A pair of the form $\{ [x,y],(n+1,j)\}$ where $x\in \Z_k$, $y \in \Z_\ell$, and $j \in \Z^*_{\ell+1}$ is covered in a similar way by codewords of $C_1$
and codewords of $C_3$.

\noindent
{\bf Part III:}

We now want to cover pairs from $\Z_k \times \Z_\ell$ which are not covered in Part II.
The pairs of $\Z_k \times \Z_\ell$ which are covered by the codewords of $C_2$ are all the pairs of the form
$\{ [x,j],[y,j]\}$, where $j \in \Z_\ell$ and $x,y \in \Z_k$ since $\{x,y\} \in F_r$ for some $0 \leq r \leq k -1$.
The pairs of $\Z_k \times \Z_\ell$ which are covered by the codewords of $C_3$ are all the pairs of the form
$\{ [i,x],[i,y]\}$, where $i \in \Z_k$ and $x,y \in \Z_\ell$ since $\{x,y\} \in G_r$ for some $0 \leq r \leq \ell -1$.
Hence, the pairs which remained to be covered
are all those of the form $\{[i_1,j_1],[i_2,j_2]\}$, where $i_1,i_2 \in \Z_k$, $j_1,j_2 \in \Z_\ell$, $i_1 \neq i_2$, and $j_1 \neq j_2$.
To cover these pairs we construct a set of codeword $C_4$ as follows.
Let $\cS_1$ be an STS$(k)$ on a set of point $\Z_k$ and $\cS_2$ be an STS$(\ell)$ on a set of points $\Z_\ell$. For each two triples
$\{ i_1,i_2,i_3\} \in \cS_1$ and $\{ j_1,j_2,j_3\} \in \cS_2$ we form the following six triples in $C_4$.
$$
\{ [i_1,j_1],[i_2,j_2],[i_3,j_3] \}, ~~~ \{ [i_1,j_1],[i_2,j_3],[i_3,j_2] \}, ~~~ \{ [i_1,j_2],[i_2,j_1],[i_3,j_3] \} ~,
$$
$$
\{ [i_1,j_2],[i_2,j_3],[i_3,j_1] \}, ~~~ \{ [i_1,j_3],[i_2,j_1],[i_3,j_2] \}, ~~~ \{ [i_1,j_3],[i_2,j_2],[i_3,j_1] \} ~.
$$

There are exactly 18 distinct pairs of the form $\{[x_1,y_1],[x_2,y_2]\}$ where $x_1,x_2 \in \{i_1,i_2,i_3\}$ and $y_1,y_2 \in \{j_1,j_2,j_3\}$,
$x_1 \neq x_2$, and $y_1 \neq y_2$. Each codeword of $C_4$ has three pairs and hence these 18 distinct pairs are contained in these
six codewords.

\begin{lemma}
Each word of weight two in $\Z_2^n \times \Z_{k+1} \times \Z_{\ell+1}$ is covered by exactly one codeword of $C_1 \cup C_2 \cup C_3 \cup C_4$.
\end{lemma}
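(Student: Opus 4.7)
The plan is to enumerate the possible types of weight-two words in $\Z_2^n \times \Z_{k+1} \times \Z_{\ell+1}$ according to which two coordinates are nonzero, and verify both existence and uniqueness of a covering codeword for each type. Up to this grouping, the weight-two words split into four classes: (i) $\{(n,i),(n+1,j)\}$; (ii) $\{[x,y],(n,i)\}$; (iii) $\{[x,y],(n+1,j)\}$; and (iv) $\{[x_1,y_1],[x_2,y_2]\}$ with $(x_1,y_1)\neq(x_2,y_2)$. A crucial preliminary observation is that the four sets $C_1,C_2,C_3,C_4$ are pairwise distinct in their support patterns: a $C_1$ codeword has one nonzero entry in $\Z_2^n$, one in $\Z_{k+1}$, one in $\Z_{\ell+1}$; a $C_2$ codeword has two in $\Z_2^n$ and one in $\Z_{k+1}$; $C_3$ is the mirror image; and $C_4$ has all three nonzero entries in $\Z_2^n$. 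This restricts, for each type, the set of $C_i$'s that can possibly contribute a covering codeword, and reduces uniqueness to a finite check.

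For type (i) only $C_1$ is a candidate, and $\{[i-1,j-1],(n,i),(n+1,j)\}$ is visibly the unique element of $C_1$ having $(n,i)$ and $(n+1,j)$ in its support. For type (ii) both $C_1$ and $C_2$ are candidates: if $x=i-1$, setting $j=y+1$ yields the unique $C_1$ codeword through the pair, and no $C_2$ codeword covers it because every $\{x',y'\}\in F_{i-1}$ avoids the isolated point $i-1$; if $x\neq i-1$, no $C_1$ codeword qualifies (it would force $x=i-1$), and the near-one-factor $F_{i-1}$ supplies a unique edge through $x$, giving a unique $C_2$ codeword. Type (iii) is handled symmetrically using $C_1$, $C_3$, and the factorization $\cG$.

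For type (iv) only $C_2,C_3,C_4$ are candidates. If $x_1=x_2$ and $y_1\neq y_2$, only $C_3$ applies, and the near-one-factorization $\cG$ places $\{y_1,y_2\}$ in a unique $G_{j-1}$, determining a unique covering codeword; the mirror sub-case $y_1=y_2$, $x_1\neq x_2$ is handled by $\cF$ and $C_2$. If $x_1\neq x_2$ and $y_1\neq y_2$, only $C_4$ is a candidate: the Steiner triple systems $\cS_1$ and $\cS_2$ pin down unique blocks $\{i_1,i_2,i_3\}\supset\{x_1,x_2\}$ and $\{j_1,j_2,j_3\}\supset\{y_1,y_2\}$, and the counting already given when $C_4$ was defined (six codewords, three admissible pairs each, exhausting the $18$ mixed pairs with distinct row and column indices) shows that $\{[x_1,y_1],[x_2,y_2]\}$ lies in exactly one of these six codewords.

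The argument is essentially bookkeeping rather than deep, and the main obstacle is the last sub-case of type (iv): one must confirm both that the six codewords attached to a single ordered pair of base blocks cover the $18$ admissible mixed pairs without repetition, and that two different pairs of base blocks cannot yield a codeword through the same mixed pair — the latter being guaranteed by the uniqueness of a block through $\{x_1,x_2\}$ in $\cS_1$ and through $\{y_1,y_2\}$ in $\cS_2$. Once these facts are checked, aggregating over types yields the claim.
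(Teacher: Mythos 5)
Your proof is correct, but it takes a genuinely different route from the paper. The paper argues only that every pair is covered \emph{at least} once (this is the content of the running discussion in Parts I--III) and then upgrades this to \emph{exactly} once by a global counting argument: it computes $|C_1|+|C_2|+|C_3|+|C_4|$, checks that the sum equals $(k\ell+(k+\ell)n+\binom{n}{2})/3$, the number of codewords forced by Corollary~2, and concludes that since each codeword covers three pairs and the totals match, no pair can be covered twice. You instead prove uniqueness locally and directly: you classify weight-two words by support pattern, observe that the support patterns of $C_1,\dots,C_4$ restrict which set can possibly cover each class, and then use the defining properties of the near-one-factorizations (the isolated point of $F_{i-1}$ being exactly $i-1$, uniqueness of the edge through a non-isolated point) and of the two STSs (uniqueness of the block through a pair) to pin down a single covering codeword in each case. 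Your approach is self-contained and does not depend on the codeword count or on the disjointness of the $C_i$'s as sets, while the paper's is shorter at the cost of relying on Corollary~2 and on the "at least once" claims being fully established in the construction's prose. The one place where you are no more detailed than the paper is the last sub-case of type (iv): the assertion that the six codewords built from a pair of base blocks cover the $18$ mixed pairs without repetition deserves the one-line justification that a bijection between two $3$-sets is determined by its values on any two elements, so each mixed pair lies in exactly one of the six permutation codewords; you flag this as needing confirmation rather than supplying it, but it is immediate.
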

\begin{proof}
The discussion on the three parts of the construction implies that each pair is covered by at least one codeword.
To complete the proof it is sufficient to show that the number of codewords is exactly $(k \cdot \ell + (k + \ell)n + \binom{n}{2})/3$
as indicated in Corollary~\ref{cor:numCodeW}.

In $C_1$ there are $k \cdot \ell$ codewords.

In $C_2$ there are $\frac{k \cdot \ell (k-1)}{2}$ codewords and similarly in $C_3$ there are $\frac{k \cdot \ell (\ell-1)}{2}$ codewords.

The STS$(k)$ $\cS_1$ has $\frac{k(k-1)}{6}$ codewords and the STS$(\ell)$ has
$\frac{\ell(\ell-1)}{6}$ codeword. Therefore, $C_4$ has $6 \cdot \frac{k(k-1)}{6} \cdot \frac{\ell(\ell-1)}{6} = \frac{k(k-1)\ell(\ell-1)}{6}$
codewords (recall that $n =k \cdot \ell$).

Therefore, since the $C_i$'s are disjoint, it follows that $C_1 \cup C_2 \cup C_3 \cup C_4$ contains
$$
k \cdot \ell + \frac{k \cdot \ell (k-1)}{2} + \frac{k \cdot \ell (\ell-1)}{2} + \frac{k(k-1)\ell(\ell-1)}{6} =
\frac{k \cdot \ell + (k + \ell)n + \binom{n}{2}}{3}
$$
codewords as required.
\end{proof}

\begin{corollary}
If $k \equiv 1$ or $3~(\mmod~6)$ and $\ell \equiv 1$ or $3~(\mmod~6)$, then there exists an \textup{MS}$(2,3,\Z_2^n \times \Z_{k+1} \times \Z_{\ell+1})$,
where $n = k \cdot \ell$.
\end{corollary}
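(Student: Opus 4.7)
The claim follows immediately from the construction of Section~\ref{sec:const_MSTS} once the right existence results are invoked. My plan is to verify that every ingredient used in Parts I--III is available under the hypothesis, assemble them as prescribed, and then ratify the minimum-distance requirement, which is the only piece not already established by the preceding lemma.

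First, the hypothesis $k,\ell \equiv 1$ or $3~(\mmod~6)$ forces both $k$ and $\ell$ to be odd, so near-one-factorizations $\cF$ of $K_k$ and $\cG$ of $K_\ell$ exist; the classical cyclic prescription $F_i = \{\{i+r,\,i-r\} : 1 \leq r \leq (k-1)/2\}$ in $\Z_k$ suffices, since the isolated point of $F_i$ is then $i$ as required by the definition in Section~\ref{sec:Preli}, and each unordered pair of distinct elements of $\Z_k$ appears in exactly one $F_i$ because $2$ is invertible modulo $k$. The condition $k \equiv 1$ or $3~(\mmod~6)$ is also Kirkman's classical necessary and sufficient condition for an $\textup{STS}(k)$ to exist, and analogously for $\ell$, which provides the two triple systems $\cS_1$ and $\cS_2$ used in Part~III. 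With $\cF,\cG,\cS_1,\cS_2$ fixed, one forms $\cC \triangleq C_1 \cup C_2 \cup C_3 \cup C_4$ with $n = k\cdot \ell$ exactly as in the construction.

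The preceding lemma already shows that every weight-$2$ word in $\Z_2^n \times \Z_{k+1} \times \Z_{\ell+1}$ is covered by exactly one codeword of $\cC$. It remains to check that $\cC$ has minimum Hamming distance at least $3 = 2(3-2)+1$, which is the distance requirement distinguishing an MSTS from a bare $(2,3)$-GDD. If two distinct weight-$3$ codewords of $\cC$ were at Hamming distance at most~$2$, they would agree in at least two common nonzero coordinates with identical nonzero values, and the weight-$2$ word supported on those two coordinates would then be covered by both codewords, contradicting the uniqueness established in the lemma. Hence $\cC$ has minimum distance at least $3$, and $\cC$ is the desired MS$(2,3,\Z_2^n \times \Z_{k+1} \times \Z_{\ell+1})$ with $n = k\cdot \ell$.

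The main obstacle is conceptual rather than technical: one must notice that the purely design-theoretic condition of unique pair-coverage, already verified in the lemma, automatically encodes the minimum-distance requirement that promotes the design to an MSTS. Once this connection is articulated, the corollary reduces to combining the preceding lemma with Kirkman's theorem on the existence of $\textup{STS}(v)$ and the elementary existence of near-one-factorizations of odd-order complete graphs.
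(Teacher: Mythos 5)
Your overall route is the paper's: the corollary is meant to follow from the construction of Section~\ref{sec:const_MSTS} together with the covering lemma, once one notes that the hypothesis on $k$ and $\ell$ is exactly what guarantees the existence of the two Steiner triple systems $\cS_1$, $\cS_2$ (Kirkman) and that near-one-factorizations of $K_k$ and $K_\ell$ exist for all odd orders. Those ingredients are handled correctly, and your cyclic near-one-factorization $F_i=\{\{i+r,i-r\}: 1\le r\le (k-1)/2\}$ is fine.

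However, your minimum-distance step contains a genuine error. You claim that two distinct weight-$3$ codewords at Hamming distance at most $2$ must agree, with identical nonzero values, on at least two common support coordinates, so that unique pair-coverage forces distance at least $3$. This is false over a non-binary alphabet: two weight-$3$ words with the \emph{same} support that agree in one coordinate and take \emph{different} nonzero values in the other two are at distance $2$, yet they share no common weight-$2$ subword, so unique coverage is not violated. Concretely, $\{p,(n,1),(n+1,1)\}$ and $\{p,(n,2),(n+1,2)\}$ form a legitimate pair of GDD blocks at distance $2$. This is precisely the phenomenon that separates a GDD of type $1^nk^1\ell^1$ from an MSTS and is the entire reason condition (4) of Lemma~\ref{lem:necessaryMSTS}, $n\ge k\cdot\ell$, exists; your closing claim that unique pair-coverage ``automatically encodes the minimum-distance requirement'' would make that condition vacuous and contradicts the paper's own discussion. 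The gap is repairable: writing $d=6-2s+t$ for codewords sharing $s$ support coordinates and disagreeing in value on $t$ of them, every case with $d\le 2$ except $(s,t)=(3,2)$ yields a doubly covered weight-$2$ word; in the case $(s,t)=(3,2)$ the two disagreeing coordinates must both be non-binary, hence must be coordinates $n$ and $n+1$, forcing both codewords into $C_1$, where distinct codewords have distinct supports in the $\Z_2^n$ part because $(i,j)\mapsto[i-1,j-1]$ is injective. You need this (or an equivalent direct check on $C_1$) to close the argument.
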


When $k$ or $\ell$ is congruent to 5 modulo 6 the codewords of $C_4$ are designed in a different way as demonstrated in the following example.
\begin{example}
For $k=5$ and $\ell=3$, $C_1$, $C_2$, $C_3$, and $C'_4$ (to replace $C_4$), which yield an
\textup{MS}$(2,3,\Z_2^n \times \Z_6 \times \Z_4)$ are as follows.

$$
C_1=
\begin{array}{ccc}
\{[0,0],(15,1),(16,1)\} & \{[0,1],(15,1),(16,2)\} & \{[0,2],(15,1),(16,3)\} \\
\{[1,0],(15,2),(16,1)\} & \{[1,1],(15,2),(16,2)\} & \{[1,2],(15,2),(16,3)\} \\
\{[2,0],(15,3),(16,1)\} & \{[2,1],(15,3),(16,2)\} & \{[2,2],(15,3),(16,3)\} \\
\{[3,0],(15,4),(16,1)\} & \{[3,1],(15,4),(16,2)\} & \{[3,2],(15,4),(16,3)\} \\
\{[4,0],(15,5),(16,1)\} & \{[4,1],(15,5),(16,2)\} & \{[4,2],(15,5),(16,3)\} \\
\end{array}
$$

$$
C_2=
\begin{array}{ccc}
\{[1,0],[4,0],(15,1)\} & \{[2,0],[3,0],(15,1)\} & \{[1,1],[4,1],(15,1)\} \\
\{[2,1],[3,1],(15,1)\} & \{[1,2],[4,2],(15,1)\} & \{[2,2],[3,2],(15,1)\} \\
\{[0,0],[2,0],(15,2)\} & \{[3,0],[4,0],(15,2)\} & \{[0,1],[2,1],(15,2)\} \\
\{[3,1],[4,1],(15,2)\} & \{[0,2],[2,2],(15,2)\} & \{[3,2],[4,2],(15,2)\} \\
\{[0,0],[4,0],(15,2)\} & \{[1,0],[3,0],(15,2)\} & \{[0,1],[4,1],(15,2)\} \\
\{[1,1],[3,1],(15,2)\} & \{[0,2],[4,2],(15,2)\} & \{[1,2],[3,2],(15,2)\} \\
\{[0,0],[1,0],(15,2)\} & \{[2,0],[4,0],(15,2)\} & \{[0,1],[1,1],(15,2)\} \\
\{[2,1],[4,1],(15,2)\} & \{[0,2],[1,2],(15,2)\} & \{[2,2],[4,2],(15,2)\} \\
\{[0,0],[3,0],(15,2)\} & \{[1,0],[2,0],(15,2)\} & \{[0,1],[3,1],(15,2)\} \\
\{[1,1],[2,1],(15,2)\} & \{[0,2],[3,2],(15,2)\} & \{[1,2],[2,2],(15,2)\} \\
\end{array}
$$

$$
C_3=
\begin{array}{ccc}
\{[0,1],[0,2],(16,1)\} & \{[1,1],[1,2],(16,1)\} & \{[2,1],[2,2],(16,1)\} \\
\{[3,1],[3,2],(16,1)\} & \{[4,1],[4,2],(16,1)\} & \\
\{[0,0],[0,2],(16,2)\} & \{[1,0],[1,2],(16,2)\} & \{[2,0],[2,2],(16,2)\} \\
\{[3,0],[3,2],(16,2)\} & \{[4,0],[4,2],(16,2)\} & \\
\{[0,0],[0,1],(16,3)\} & \{[1,0],[1,1],(16,3)\} & \{[2,0],[2,1],(16,3)\} \\
\{[3,0],[3,1],(16,3)\} & \{[4,0],[4,1],(16,3)\} & \\
\end{array}
$$

$$
C'_4=
\begin{array}{cccc}
\{[0,0],[1,1],[2,2]\} & \{[0,0],[3,1],[4,2]\} & \{[0,0],[1,2],[2,1]\} & \{[0,0],[3,2],[4,1]\} \\
\{[0,1],[1,2],[3,0]\} & \{[0,1],[2,0],[4,2]\} & \{[0,1],[1,0],[3,2]\} & \{[0,1],[2,2],[4,0]\} \\
\{[0,2],[1,1],[4,0]\} & \{[0,2],[2,0],[3,1]\} & \{[0,2],[1,0],[4,1]\} & \{[0,2],[2,1],[3,0]\} \\
\{[1,0],[2,1],[4,2]\} & \{[1,0],[2,2],[3,1]\} & \{[1,1],[2,0],[3,2]\} & \{[1,1],[3,0],[4,2]\} \\
\{[1,2],[2,0],[4,1]\} & \{[1,2],[3,1],[4,0]\} & \{[2,1],[3,2],[4,0]\} & \{[2,2],[3,0],[4,1]\} \\
\end{array}
$$
\end{example}

\section{More Constructions for MSTSs}
\label{sec:more}

In this section we will construct MSTSs whose length is not the shortest one.
For this construction another structure is requires.

\begin{definition}
An \emph{$(n,r)$-pairs-triples design}, $\{ \cT_1,\cT_2,\ldots,\cT_r, \cR \}$, where $n$ is
even and $r$ is odd consists of $r+1$ sets. The $r$ sets $\cT_1,\cT_2,\ldots,\cT_r$ are disjoint one-factors of $K_n$
on the set of points~$\Z_n$, and one set $\cR$ of triples on the set of points $\Z_n$, such that each pair of $\Z_n$ is
contained either in one of the $\cT_i$'s or in a triple of $\cR$.
\end{definition}

\begin{theorem}
\label{thm:PT_GDD}
An $(n,r)$-pairs-triples design exists if and only if there exists a GDD of type $1^n r^1$, i.e., an \textup{MS}$(2,3,\Z_n \times \Z_{r+1})$.
\end{theorem}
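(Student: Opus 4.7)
The plan is to establish both directions by an explicit bijective correspondence between the two types of designs, using the obvious split of GDD blocks according to whether they meet the big group or not.

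For the forward direction, I would start with an $(n,r)$-pairs-triples design $\{\cT_1,\dots,\cT_r,\cR\}$ on $\Z_n$ and build a GDD on the point set $\Z_n \cup \{a_1,\dots,a_r\}$, where the $n$ singleton groups are the elements of $\Z_n$ and the one group of size $r$ is $\{a_1,\dots,a_r\}$. The blocks are defined in two families: (i) for each $i$ and each pair $\{x,y\} \in \cT_i$, include the triple $\{x,y,a_i\}$; (ii) include every triple of $\cR$ as a block. Then I verify pair-coverage by cases. A pair $\{x,a_i\}$ with $x \in \Z_n$ is covered exactly once because $\cT_i$ is a one-factor of $K_n$, so $x$ belongs to a unique pair of $\cT_i$. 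A pair $\{x,y\} \subseteq \Z_n$ is covered either by $\{x,y,a_i\}$ when $\{x,y\} \in \cT_i$, or by a triple of $\cR$ when it is not in any $\cT_i$; the pairs-triples definition guarantees exactly one of these occurs. Pairs inside $\{a_1,\dots,a_r\}$ need not be covered.

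For the reverse direction, I would take a GDD of type $1^n r^1$ with big group $\{a_1,\dots,a_r\}$ and singleton groups indexed by $\Z_n$. Define $\cT_i \triangleq \{\{x,y\}\subseteq\Z_n : \{x,y,a_i\} \text{ is a block}\}$ and let $\cR$ be the collection of blocks that lie entirely in $\Z_n$. Since every pair $\{x,a_i\}$ is in exactly one GDD block, the sets $\cT_i$ partition $\Z_n$ into pairs and are therefore one-factors of $K_n$. If some pair $\{x,y\}$ lay in both $\cT_i$ and $\cT_j$ for $i\neq j$, then the pair $\{x,y\}$ would be covered by the two distinct blocks $\{x,y,a_i\}$ and $\{x,y,a_j\}$, contradicting uniqueness; hence the $\cT_i$'s are pairwise disjoint. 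Every pair inside $\Z_n$ is covered by a unique GDD block, which is either of the form $\{x,y,a_i\}$ (placing the pair in $\cT_i$) or entirely inside $\Z_n$ (placing it in $\cR$), yielding a valid $(n,r)$-pairs-triples design.

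There is no real obstacle here: the main point is simply to check that the two constructions are mutually inverse and that pair-coverage in one object matches pair-coverage in the other. The only bookkeeping step that deserves care is the disjointness of the $\cT_i$'s in the reverse direction, which follows from the uniqueness of pair-coverage in the GDD and is the reason no pair can appear in two different one-factors.
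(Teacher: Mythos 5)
Your proposal is correct and follows essentially the same route as the paper: both directions are handled by the same explicit correspondence (pairs of $\cT_i$ become blocks through the $i$-th element of the large group, triples of $\cR$ become blocks inside $\Z_n$, and conversely). You simply spell out the verification steps (one-factor property, disjointness of the $\cT_i$'s) that the paper leaves as ``easy to verify.''
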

\begin{proof}
Assume first that there exists an $(n,r)$-pairs-triples design on the points of $\Z_n$.
Let $\cT_1,\cT_2,\ldots,\cT_r$ be the set of $r$ disjoint one-factors and $\cR$ its set of triples.
The following sets of blocks
$$
\{ \{ (x,1),(y,1),(n,i) \} ~:~ \{x,y\} \in \cT_i, ~ 1 \leq i \leq r \}
$$
and
$$
\{ \{ (x,1),(y,1),(z,1) \} ~:~ \{x,y,z\} \in\cR \}
$$
is an MS$(2,3,\Z_n \times \Z_{r+1})$.

Assume now that $\cS$ is an MS$(2,3,\Z_n \times \Z_{r+1})$. Define the following $r$ sets
$$
\cT_i \triangleq \{ \{x,y\} ~:~ \{ (x,1),(y,1),(n,i) \} \in \cS \}, ~~ 1 \leq i \leq r
$$
and the set
$$
\cR \triangleq \{ \{ x,y,z \} ~:~ \{(x,1),(y,1),(z,1) \} \in \cS \}~.
$$
It is easy to verify that $\{ \cT_1,\cT_2,\ldots,\cT_r , \cR\}$ is an $(n,r)$-pairs-triples design.
\end{proof}

Note, that when $r=1$ an MS$(2,3,\Z_n \times \Z_{r+1})$ is just an S$(2,3,n+1)$.
For $r=n-1$ an $(n,r)$-pairs-triples design is just an one-factorization over $\Z_n$.
An MS$(2,3,\Z_n \times \Z_{r+1})$ is equivalent to a GDD of type $1^n r^1$ and since it is known that
the necessary conditions for the existence of a GDD of type $1^n r^1$ are also sufficient~\cite{CHR92}, the
following lemma is a consequence from Theorem~\ref{thm:PT_GDD}.

\begin{lemma}
An $(n,r)$-pairs-triples design exists for each $n$ divisible by 6 and odd $r$, $1 \leq r \leq n-1$ and
for each $n \equiv 2$ or $4~(\mmod ~6)$ and $r \equiv n-1 ~(\mmod ~6)$, $1 \leq r \leq n-1$.
\end{lemma}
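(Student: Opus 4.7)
The plan is to deduce this from Theorem~\ref{thm:PT_GDD} together with the known necessary-and-sufficient conditions for the existence of a GDD of type $1^n r^1$ established in~\cite{CHR92}. By Theorem~\ref{thm:PT_GDD}, an $(n,r)$-pairs-triples design exists if and only if an MS$(2,3,\Z_n \times \Z_{r+1})$ does, and an MS$(2,3,\Z_n \times \Z_{r+1})$ is exactly the same object as a GDD of type $1^n r^1$. Hence it is enough to verify that the parameter pairs $(n,r)$ listed in the lemma are precisely those allowed by the known existence result.

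First I will work out the necessary conditions directly for GDDs of type $1^n r^1$. Each of the $n$ singleton points lies in $n-1$ pairs with the other singletons and in $r$ pairs with points of the distinguished group, so its replication number $n-1+r$ must be even; in particular $n$ and $r$ must have opposite parity. Each point of the distinguished group lies in exactly $n$ pairs (one with each singleton point, and none within its own group), so $n$ must be even. Counting covered pairs, $\binom{n}{2}+nr$ must be divisible by $3$. A short case analysis modulo $6$, using that $n$ is even and $r$ is odd, shows:
\begin{itemize}
\item if $n\equiv 0\pmod 6$, then $\binom{n}{2}\equiv 0$ and $nr\equiv 0\pmod 3$ automatically, so every odd $r$ with $1\leq r\leq n-1$ is admissible;
\item if $n\equiv 2\pmod 6$, then $\binom{n}{2}\equiv 1$ and $nr\equiv 2r\pmod 3$, forcing $r\equiv 1\pmod 3$, hence $r\equiv 1\equiv n-1\pmod 6$;
\item if $n\equiv 4\pmod 6$, then $\binom{n}{2}\equiv 0$ and $nr\equiv r\pmod 3$, forcing $r\equiv 0\pmod 3$, hence $r\equiv 3\equiv n-1\pmod 6$.
\end{itemize}
This matches exactly the parameter set in the statement of the lemma.

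For sufficiency, I will cite the theorem of~\cite{CHR92}, which asserts that the obvious divisibility conditions are in fact sufficient for the existence of a GDD of type $1^n r^1$. Combined with Theorem~\ref{thm:PT_GDD}, this immediately yields the $(n,r)$-pairs-triples design in every admissible case. There is no real obstacle here beyond bookkeeping, since the hard combinatorial content is entirely absorbed into the cited existence result; the only thing one must check is that my necessary-conditions analysis coincides with the parameter list in the lemma, which it does.
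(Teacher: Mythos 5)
Your proposal is correct and follows essentially the same route as the paper, which likewise obtains the lemma by combining the equivalence of Theorem~\ref{thm:PT_GDD} with the result of~\cite{CHR92} that the necessary conditions for a GDD of type $1^n r^1$ are sufficient. The only difference is that you make the mod-$6$ verification of the necessary conditions explicit (and your arithmetic checks out), whereas the paper leaves that bookkeeping to the reader.
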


We will now present a recursive construction for MSTSs.
Assume $\cS$ is an MS$(2,3,\Z_2^n \times \Z_{k+1} \times \Z_{\ell+1})$, where $k$, $\ell$, and $n$ are odd integers.
Assume further that there exists an $(m,r)$-pairs-triples design with
$\cT_1,\cT_2,\ldots,\cT_r$ as the set of $r$ disjoint one-factors and $\cR$ as the set of triples.
Assume further that $m > n+k+\ell$ and $r=n+k+\ell$. We construct a new system $\cS'$ on the points of
$\Z_2^n \times \Z_{k+1} \times \Z_{\ell+1} \times \Z_2^m$ as follows:
\begin{enumerate}
\item[(1)] All the blocks of $\cS$ are also blocks of $\cS'$ (same codewords where \emph{zeros} are assigned to the new $m$ coordinates).

\item[(2)] We add to $\cS'$ the set of blocks $\{ \{ (i,1),(n+1+\alpha,1),(n+1+\beta,1)\} ~:~ i \in \Z_n, ~ \{\alpha,\beta\} \in \cT_{i+1} \}$.

\item[(3)] We add to $\cS'$ the set of blocks $\{ \{ (n,j),(n+1+\alpha,1),(n+1+\beta,1)\} ~:~ j \in \Z^*_{k+1}, ~ \{\alpha,\beta\} \in \cT_{n+j} \}$.

\item[(4)] We add to $\cS'$ the set of blocks
$\{ \{ (n+1,j),(n+1+\alpha,1),(n+1+\beta,1)\} ~:~ j \in \Z^*_{\ell+1}, ~ \{\alpha,\beta\} \in \cT_{n+k+j} \}$.

\item[(5)] We add to $\cS'$ the set of blocks $\{ \{ (n+1+\alpha,1),(n+1+\beta,1),(n+1+\gamma)\} ~:~ \{\alpha,\beta,\gamma\} \in \cR \}$.
\end{enumerate}

\begin{theorem}
The constructed system $\cS'$ is an \textup{MS}$(2,3,\Z_2^n \times \Z_{k+1} \times \Z_{\ell+1} \times \Z_2^m)$.
\end{theorem}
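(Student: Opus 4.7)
The plan is to verify the defining property of an MSTS: every weight-$2$ word over $\Z_2^n \times \Z_{k+1} \times \Z_{\ell+1} \times \Z_2^m$ is covered by exactly one block of $\cS'$. The minimum-distance requirement $d \geq 3$ will then follow automatically, because any two distinct weight-$3$ blocks that agreed on two nonzero positions would doubly cover the corresponding pair.

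I would partition the weight-$2$ words into three classes according to where their two nonzero coordinates lie. Call the first $n+2$ coordinates the \emph{old} coordinates and the last $m$ coordinates the \emph{new} coordinates. Class (a) consists of pairs whose two nonzero coordinates are both old; class (b) of pairs with one old and one new nonzero coordinate; class (c) of pairs whose two nonzero coordinates are both new. I would first observe that blocks of type (1) have all three nonzero positions in the old coordinates, blocks of types (2)--(4) have exactly one old nonzero position and two new ones, and blocks of type (5) have all three nonzero positions in the new coordinates, so the five families of blocks cleanly split according to the class of pairs they cover.

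For class (a), the pair is a weight-$2$ word over $\Z_2^n \times \Z_{k+1} \times \Z_{\ell+1}$, covered uniquely by the block of $\cS$ provided by its MSTS property, and these are exactly the blocks inserted in step (1). For class (b), the pair is $\{P,(n+1+\alpha,1)\}$ with $P$ one of the $r = n+k+\ell$ old nonzero points. The construction sets up a bijection between these $r$ points and the $r$ one-factors of the pairs-triples design: $(i,1)\leftrightarrow \cT_{i+1}$ for $i\in \Z_n$, $(n,j)\leftrightarrow \cT_{n+j}$ for $j\in\Z^*_{k+1}$, and $(n+1,j)\leftrightarrow \cT_{n+k+j}$ for $j\in\Z^*_{\ell+1}$. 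Since $\cT_i$ is a one-factor of $K_m$, there is a unique $\beta$ with $\{\alpha,\beta\}\in \cT_i$, producing a unique block in (2), (3), or (4) that covers the pair. For class (c), by the defining property of the $(m,r)$-pairs-triples design, the pair $\{\alpha,\beta\}$ belongs to exactly one of $\cT_1,\dots,\cT_r,\cR$; if it lies in $\cT_i$, the unique block of (2)--(4) indexed by the point corresponding to $\cT_i$ covers the pair, and if it lies in a triple of $\cR$, the unique block in (5) built from that triple covers it.

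The main work is bookkeeping: writing down the index bijection between old nonzero points and one-factors, and confirming that no block covers any pair outside its declared class so that the three uniqueness arguments above do not interact. Because $n$, $k$, $\ell$ are all odd (Lemma~\ref{lem:necessaryMSTS}), $r = n+k+\ell$ is odd as the pairs-triples definition requires, and the hypothesis $m>n+k+\ell$ ensures the design actually uses $r$ one-factors; these parity and inequality checks are the only places where the hypotheses on parameters enter, and once they are used the three classes exhaust all weight-$2$ words with exactly one covering block each, giving the MSTS property.
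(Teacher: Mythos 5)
Your proposal is correct and follows essentially the same route as the paper: split the weight-two words according to whether their support lies in the old coordinates, straddles old and new, or lies entirely in the new coordinates, and handle the three cases by the MSTS property of $\cS$, the bijection between the $r=n+k+\ell$ old nonzero points and the one-factors $\cT_1,\ldots,\cT_r$, and the defining property of the $(m,r)$-pairs-triples design, respectively. You are in fact somewhat more explicit than the paper, which states the same case analysis without spelling out the index bijection.

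One caution: your opening claim that the minimum-distance requirement follows \emph{automatically} from exact pair coverage is not valid over mixed alphabets. Two weight-three codewords with the same three-element support that agree in exactly one value are at Hamming distance $2$ yet cover disjoint sets of weight-two words, so exact coverage alone does not force $d\geq 3$. The conclusion is still true here, but for a structural reason you would need to add: the blocks of type (1) inherit distance $3$ from $\cS$, their supports differ from those of types (2)--(5), and any block of types (2)--(5) has at most one non-binary coordinate in its support, so two such blocks with equal support would agree on two binary positions and hence doubly cover a pair. (The paper's own proof omits this check entirely, so this is a refinement rather than a divergence from its argument.)
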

\begin{proof}
Each word of weight two where the nonzero elements are in the first $n+2$ positions is covered exactly once by codewords defined in (1).
The codewords defined in (2), (3), (4), and (5) do not cover these words.

Each word of weight two where one nonzero element is in position $i$, $0 \leq i \leq n-1$ and one nonzero element is in the last $m$ positions
is covered exactly once by codewords defined in (2).

Each word of weight two where one nonzero element is in position $n$ and one nonzero element is in the last $m$ positions
is covered exactly once by codewords defined in (3).

Each word of weight two where one nonzero element is in position $n+1$ and one nonzero element is in the last $m$ positions
is covered exactly once by codewords defined in (4).

All the pairs in the last $m$ coordinates are covered exactly once by codewords defined in (2), (3), and (4), as defined by the
$(m,r)$-pairs-triples design.
\end{proof}

\section{Conclusion and Future research}
\label{sec:conclude}

We have analyzed mixed Steiner triple systems
MS$(2,3,\Z_2^n \times \Z_{k+1} \times \Z_{\ell+1})$ for the case where exactly in two positions the alphabet is not binary.
A construction for codes of shortest length, $n= k \cdot \ell$, where $k \equiv 1$ or $3~(\mmod ~ 6)$, and
$\ell \equiv 1$ or $3~(\mmod ~ 6)$ was presented. A general recursive construction was also presented.

Many open problems on MSTSs and associated GDDs remain for future research as well as
problems for other mixed Steiner systems. A few of these problems which are directly
connected to our exposition are as follows.
\begin{enumerate}
\item Are the necessary conditions of Lemma~\ref{lem:necessaryMSTS} are also sufficient for all odd $k$, $\ell$, and $n$?
If yes, we would like to see a proof. If no, we would like to see a proof of values for which the necessary conditions are not sufficient
and also more constructions for the cases when they are sufficient, especially where either $k$ or $\ell$ is congruent to 5 modulo 6.

\item What is the shortest length of a GDD of type $1^n k^1 \ell^1$, where $k \neq \ell$?

\item An obvious necessary condition for the existence of an MS$(2,\kappa,\Z_2^n \times \Z_{k+1} \times \Z_{\ell+1})$, where $\kappa \geq 3$, is that
$n \geq (\kappa -2) k \cdot \ell$. Are there mixed Steiner systems that attains this bound for each $\kappa$?
\end{enumerate}


%


\begin{thebibliography}{99}


\bibitem{CHR92}
    {\sc C. J. Colbourn,  D. G. Hoffman, and R. Rees,}
    {\sl A new class of group divisible designs with block size three},
    {\em J. Combinatorial Theory, Series A,} 59 (1992), 73 -- 89.

\bibitem{CCK95}
    {\sc C. J. Colbourn, C. A. Cusack, and D. L. Kreher,}
    {\sl Partial Steiner systems with equal-sized holes},
    {\em J. Combinatorial Theory, Series A,} 70 (1995), 56 -- 65.

\bibitem{ESSSV07}
   {\sc S. I. El-Zanati, G. F. Seelinger, P. A. Sissokho, L. E. Spence, and C. Vanden Eynden,}
   {\sl Partitions of finite vector spaces Into subspaces,}
   {\em J. of Combin. Designs,} 16 (2007) 329--341.

\bibitem{Etz97}
    {\sc T. Etzion,}
    {\sl Optimal constant weight codes over $Z_k$ and generalized designs,}
    {\em Discrete \ Mathematics,} 169 (1997), 55 -- 82.

\bibitem{Etz98}
    {\sc T. Etzion,}
    {\sl Perfect byte-correcting codes,}
    {\em IEEE Trans. on Infor. Theory,} 44 (1998), 3140 -- 3146.

\bibitem{Etz22}
     {\sc T. Etzion,}
     {\em Perfect Codes and Related Structures},
     {\sl World Scientific, 2022}.

\bibitem{Etz25}
    {\sc T. Etzion,}
    {\sl Steiner systems over mixed alphabet and related designs,} in preparation and in
    https://arxiv.org/abs/2506.23860 (2025).

\bibitem{Han75}
   {\sc H. Hanani,}
   {\sl Balanced in complete block designs and related designs,}
   {\em Discrete Math.,} 11 (1975) 255--369.


\bibitem{HSS99}
     {\sc A. S. Hedayat, N. J. A. Sloane, and J. Stufken,}
     {\em Orthogonal Arrays -- Theory and Applications},
     {\sl Springer, 1999}.

\bibitem{HeSc71}
    {\sc M. Herzog and J. Sch\"{o}nheim,}
    {\sl Linear and nonlinear single-error correcting perfect mixed codes,}
    {\em Infor. and Control,} 18 (1971), 364 -- 368.

\bibitem{HeSc72}
    {\sc M. Herzog and J. Sch\"{o}nheim,}
    {\sl Group partition, factorization and the vector covering problem,}
    {\em Canad. Math. Bull.,} 15 (1972), 207 -- 214.

\bibitem{HoPa72}
    {\sc S. J. Hong and A. M. Patel,}
    {\sl A general class of maximal codes for computer applications,}
    {\em IEEE Trans. on Computers,} 21 (1972), 1322 -- 1331.

\bibitem{Sch70}
   {\sc J. Sch\"{o}nheim,}.
   {\sl Mixed codes,}
   {\em Proc. Calgary Internat. Conf. on Combinatorial Structures and their Applications,} pp. 385 (Gordon and Breach, New York) 1970.

\bibitem{Wal97}
    {\sc W. D. Wallis,}
    {\em One-factorizations},
    Kluwer Academic Publisher, 1997.


\bibitem{Zar51}
    {\sc S. K. Zaremba,}
    {\sl A covering theorem for abelian group,}
    {\em J. London Math. Soc.,} 26 (1951), 71 -- 72.

\bibitem{Zar52}
    {\sc S. K. Zaremba,}
    {\sl Covering  problems concerning abelian groups,}
    {\em J. London Math. Soc.,} 27 (1952), 242 -- 246.

\end{thebibliography}
\end{document}